\documentclass{amsart}
%% Language and font encodings
\usepackage[english]{babel}
\usepackage[utf8x]{inputenc}
\usepackage{listings}
\usepackage{caption}
%% Sets page size and margins
\usepackage{float}
%% Useful packages
\usepackage{amsmath}
\usepackage{amsfonts} 
\numberwithin{equation}{section}
\numberwithin{figure}{section}
\numberwithin{table}{section}
\usepackage{mathrsfs}
\usepackage{amssymb}
\usepackage[colorinlistoftodos]{todonotes}
\usepackage{listings}
\usepackage{url}
\usepackage{graphicx}
\usepackage{bm}
\usepackage{subcaption}
\usepackage{subeqnarray}
\usepackage{multirow}
\newtheorem{theorem}{Theorem}[section]
\newtheorem{lemma}{Lemma}[section]

\graphicspath{ {./images/} }
% \DeclareGraphicsExtensions{.pdf,.jpg,.png}

\begin{document}
\title{Greedy Algorithm for Neural Networks for Indefinite Elliptic Problems}

\author{Qingguo Hong}

\address{Department of Mathematics and Statistics, Missouri University of Science and Technology\\
 Rolla, MO 65409, USA \\
qingguohong@mst.edu}

\author{Jiwei Jia}

\address{School of Mathematics, Jilin University\\
Changchun, Jilin 130012 \\
jiajiwei@jlu.edu.cn}

\author{Young Ju Lee}

\address{Department of Mathematics, Texas State University\\
San Marcos, TX 78666, USA \\
yjlee@txstate.edu}

\author{Ziqian Li}

\address{School of Mathematics, Jilin University \\
Changchun, Jilin 130012, China \\
zqli23@mails.jlu.edu.cn}

\maketitle

\begin{abstract}
The paper presents a priori error analysis of the shallow neural network approximation to the solution to the indefinite elliptic equation and and cutting-edge implementation of the Orthogonal Greedy Algorithm (OGA) tailored to overcome the challenges of indefinite elliptic problems, which is a domain where conventional approaches often struggle due to nontraditional difficulties due to the lack of coerciveness. A rigorous a priori error analysis that shows the neural network’s ability to approximate indefinite problems is confirmed numerically by  OGA methods. We also present a discretization error analysis of the relevant numerical quadrature. In particular, massive numerical implementations are conducted to justify the theory, some of which showcase the OGA’s superior performance in comparison to the traditional finite element method. This advancement illustrates the potential of neural networks enhanced by OGA to solve intricate computational problems more efficiently, thereby marking a significant leap forward in the application of machine learning techniques to mathematical problem-solving.
\end{abstract}

\section{Introduction}
Indefinite elliptic problems constitute a series of challenging open problems with profound implications across various fields of scientific computing and mathematical physics, including applications such as the Helmholtz equation, which describes wave propagation under certain conditions. The fundamental difficulty in addressing indefinite elliptic problems lies in the loss of coerciveness, meaning that standard energy methods and classical numerical theories that rely on positive definiteness are no longer applicable. This loss poses significant hurdles, both analytically and numerically, and demands innovative approaches for the development of effective and robust solution techniques. 

Over the years, various methods have been adopted to tackle indefinite elliptic problems. Notable among these are the multigrid methods, which have been explored for their efficiency in solving large-scale discretized problems, as outlined in \cite{bramble1988analysis} and \cite{wang1993convergence}. Domain decomposition methods offer another avenue to address these problems such as \cite{cai1992domain}. Furthermore, Krylov subspace methods, such as the Generalized Minimum Residual (GMRES) algorithm, have been enhanced for indefinite problems through preconditioning strategies, as exemplified in the work of \cite{xu1992preconditioned}. In recent years, there has been a surge in the application of finite element methods (FEMs) to indefinite elliptic problems. Mixed finite element methods (MFEM) are among the variants that have shown promising results in providing stable discretization and facilitating error analysis, as described in research papers like \cite{carstensen2021adaptive} and \cite{carstensen2022stability}. Another innovative approach is the virtual element method (VEM), a generalization of FEMs that allows for more flexibility in the mesh generation process and shows excellent potential for handling complex geometries, as discussed in \cite{carstensen2022priori}. Furthermore, Weak Galerkin (WG) finite element methods have been developed to provide remarkable error estimates and superconvergence properties, as highlighted in \cite{zhu2022superconvergent}.

Neural networks have undergone a renaissance in recent years, emerging as a formidable tool for tackling some of the most intractable problems in computational science. Among these challenges, solving partial differential equations (PDEs) stands out as an area where neural networks are having a particularly significant impact. Recognized for their remarkable ability to model and approximate functions, neural networks offer an alternative to traditional numerical methods, especially for complex, high-dimensional, and nonlinear PDEs, which are typically infeasible for grid-based techniques due to the curse of dimensionality.
One of the groundbreaking advances that encapsulates the use of neural networks in PDEs is the Physics-Informed Neural Networks (PINNs) framework, as introduced in \cite{raissi2019physics}. PINNs have transformed the landscape by integrating physical laws directly into the learning process, which traditionally relied only on data.  Applications have been diverse and successful, ranging from fractional equations as seen in \cite{pang2019fpinns}, to complex fluid dynamics problems tackled in \cite{jin2021nsfnets} and \cite{bihlo2022physics}, and even to the domain of phase field models, as researched in \cite{mattey2022novel}. Adjacent to PINNs are various other techniques that leverage neural networks to solve PDEs. The Deep Ritz Method \cite{yu2018deep} is an illustrative example that adapts the variational approach of the Ritz method for neural networks. Weak Adversarial Networks, outlined in \cite{zang2020weak}, bring the power of generative adversarial networks to solve PDEs. Moreover, the development of network architectures such as Deeponet \cite{lu2021learning} and \cite{li2020fourier} opened avenues for creating neural networks capable of learning mappings from functional data to PDE solutions, further exemplifying the breadth of neural network applications in this field.

While neural networks have shown remarkable capabilities in numerically tackling PDEs, the theoretical underpinnings of these methods, especially as they relate to error and convergence analysis, have lagged behind their practical applications. Deep neural networks, in particular, pose a challenge for rigorous analysis due to their complex, highly nonlinear structures and large parameter spaces. Despite these challenges, there is an emerging body of work dedicated to developing a deeper understanding of neural network-based algorithms' theoretical aspects. A notable contribution in this domain is the finite neuron method (FNM), proposed and analyzed in \cite{CiCP-28-1707}. The FNM presents an innovative approach where the architecture of the network is restricted to a finite number of neurons, rendering the system more amenable to classic notions of stability and convergence. This conceptual framing allows for a rigorous error and convergence analysis to be conducted within this finite-dimensional setting.
Further reinforcing this theoretical framework, there have been enlightening studies such as \cite{siegel2022high} and \cite{siegel2023characterization}, which delve into the approximation theory of shallow neural networks. Additionally, \cite{hong2021priori} contributes a priori analysis of shallow neural networks solving PDEs. Building upon the foundations laid by the FNM, Xu et al introduced advanced algorithmic strategies with a theoretical backing, namely the relaxed greedy algorithm (RGA) and orthogonal greedy algorithm (OGA) as discussed in \cite{siegel2023greedy}. These techniques further optimize the training of shallow neural networks by systematically enhancing the process of selecting and combining neuron functions to minimize residuals. This algorithm gives complete error and convergence analysis and shows great potential in solving PDEs.

In this paper, by adapting the classical theory due to Schatz and Wang \cite{schatz1996some} for the FNM, we establish the optimal convergence estimate for the neural networks to the solution to the indefinite problem. This estimate is then shown to be achieved by the application of the orthogonal greedy algorithm numerically via extensive numerical experiments. The outline of the remainder of the paper is as follows. In Section 2, we describe preliminaries and the OGA method to solve the problem. In Section 3, we introduce the error analysis and convergence analysis. In Section 4, the results of numerical experiments illustrating the earlier derived theory are given. Finally, a conclusion is given in Section 5.

\section{The OGA Method for Shallow Neural Networks}
\subsection{Preliminaries}
In this paper, we use the standard Sobolev space notations. Given a non-negative integer $k$ and a bounded domain $\Omega\subset \mathbb{R}^d$, let
\begin{equation}
	H^k(\Omega):=\{v\in L^2(\Omega),\partial^\alpha v\in L^2(\Omega), \vert \alpha\vert\leq k\}
\end{equation}
be standard Sobolev spaces with $\|\cdot\|_k$ norm and $|\cdot|_{k}$ seminorm, given respectively as follows: 
$$\Vert v\Vert_{k}:=\left( \sum_{\vert \alpha \vert\leq k}\Vert \partial^\alpha v\Vert_0^2 \right)^{1/2} \quad \mbox{ and } \quad \vert v\vert_{k}:=\left( \sum_{\vert \alpha \vert= k}\Vert \partial^\alpha v\Vert_0^2 \right)^{1/2}.$$
For $k=0$, $H^0(\Omega)$ is the standard $L^2(\Omega)$ space with the inner product denoted by $(\cdot,\cdot)$.

We consider the following simplified second-order elliptic equation with certain boundary conditions:
\begin{equation}
\label{eq:indefinite}
		\left\{
			\begin{aligned}
				 -\nabla \cdot (A\nabla u)+ cu&=f\quad \rm{in} \ \Omega, \\
				 B^k(u)&=0 \quad \rm{on}\ \partial\Omega,
			\end{aligned}
		\right.
	\end{equation}
where $B^k(u)$ denotes the Dirichlet, Neumann, or mixed boundary conditions. For simplicity, we use the Neumann boundary condition in this paper. Here, $A=(a_{ij}(x))_{d\times d}$ is a symmetric, positive definite matrix of coefficients. The weak formulation \eqref{eq:indefinite} seeks $u\in H^1(\Omega)$ such that  
\begin{equation}
\label{eq:weak}
a(u,v) := (A\nabla u,\nabla v)+(cu,v)=(f,v),\quad v\in H^1(\Omega).
\end{equation}
We assume that $A$ is uniformly elliptic, i.e, there is a positive constant $\beta$ such that
\begin{equation}
\label{eq:positive}
	\xi^T A\xi \geq \beta \xi^T\xi,\quad \forall\xi\in \Omega.
\end{equation}
On the other hand, we shall assume that the coefficient $c$ in (\ref{eq:indefinite}) is negative. Thus, the bilinear form $a(\cdot,\cdot)$ is indefinite. Namely, it does not satisfy the coerciveness. 

\subsection{Shallow Neural Network}
We introduce a set of functions by finite expansions concerning a dictionary $\mathbb{D}\subset H^m(\Omega)$. The set is defined as
\begin{equation}
	\sum\nolimits_{n,M}(\mathbb{D})=\left\{ \sum_{i=1}^n a_ig_i,\, g_i\in\mathbb{D},\, \sum_{i=1}^n\vert a_i\vert\leq M \right\}. 
\end{equation}
Note that here we restrict the $\ell^1$ norm of the coefficients $a_i$ in the expansion. For shallow neural networks with $\rm{ReLU}^k$ activation function $\sigma=\max\{0,x\}^k$, the dictionary $\mathbb{D}$ of $d$-dimension would be taken as 
\begin{equation}
	\mathbb{D}=\mathbb{P}_k^d=\left\{\sigma_k({\bm{\omega}}\cdot{\bm{x}}+b):\, {\bm{\omega}}\in S^{d-1},\, b\in [c_1,c_2]\right\}\subset L^2(B_1^d),
\end{equation}
where $S^{d-1}=\{{\bm{\omega}}\in \mathbb{R}^d:\,\vert{\bm{\omega}}\vert=1 \}$ is the unit sphere and $B_1^d$ is the closed $d-$dimensional unit ball. Here $c_1$ and $c_2$ are chosen to satisfy
\begin{equation}
	c_1<\inf\{{\bm{\omega}}\cdot{\bm{x}}:{\bm{x}}\in \Omega^d,{\bm{\omega}}\in S^{d-1}\}<\sup\{{\bm{\omega}}\cdot{\bm{x}}:{\bm{x}}\in \Omega^d,{\bm{\omega}}\in S^{d-1}\}<c_2.
\end{equation}
Given a dictionary $\mathbb{D}$, the space we measure regularity of the solution is the $\mathcal{K}_1(\mathbb{D})$ space \cite{devore1998nonlinear}. The closed convex hull of $\mathbb{D}$ is defined by
\begin{equation}
	B_1(\mathbb{D})=\overline{\bigcup_{n=1}^\infty \sum\nolimits_{n,1}(\mathbb{D})},
\end{equation}
and the $\mathcal{K}_1(\mathbb{D})$ norm is defined by
\begin{equation}
	\Vert f\Vert_{\mathcal{K}_1(\mathbb{D})}=\inf\{t>0: f\in tB_1(\mathbb{D}) \}.
\end{equation}
So the $\mathcal{K}_1(\mathbb{D})$ space is given by
\begin{equation}
	\mathcal{K}_1(\mathbb{D}):=\{f\in H^m(\Omega): \Vert f\Vert_{\mathcal{K}_1(\mathbb{D})}<\infty\}.
\end{equation}
It should be noted that if $\sup_{g\in \mathbb{D}}\Vert g\Vert_H<\infty$ where $\Vert \cdot\Vert_H$ is the norm induced by the energy inner product, the $\mathcal{K}_1(\mathbb{D})$ space is a Banach space \cite{siegel2023characterization}. Further, we define the $B_M(\mathbb{D})$
\begin{equation}
	B_M(\mathbb{D})=\{u\in\mathcal{K}_1(\mathbb{D}): \Vert u\Vert_{\mathcal{K}_1(\mathbb{D})}\leq M \}
\end{equation}
to analyze the quadrature error in Section \ref{sec:discretization}.

\subsection{The OGA Method}
The orthogonal greedy algorithm is an algorithm used for approximating functional representations in a given function space. It is particularly relevant in the context of sparse approximations and has applications in signal processing, numerical analysis, and machine learning. The algorithm iteratively builds an approximation of a target function by selecting elements from a dictionary that are not necessarily orthogonal. The mathematical framework set out by DeVore et al. DeVore et al. \cite{devore1996some} investigates a quantifiable analysis of the OGA's performance. Given a dictionary $\mathbb{D}$ within a Hilbert space $H$, and a target function $f \in H$, the OGA constructs an approximation $f_n$ by iteratively selecting elements $(\phi_{1}, \phi_{2}, \ldots, \phi_{n})$ from $\mathbb{D}$. These elements are chosen based on the criterion of maximal inner product with the current residual, which after the $k$-th step is defined as the remainder $r_k = f - f_k$. Precisely, the element $\phi_{k+1}$ selected in the $(k+1)$-th step satisfies:
\begin{equation}
\phi_{k+1} = \arg\max_{\phi_{k+1} \in \mathbb{D}} |\langle r_k, \phi_{k+1} \rangle_H|, 
\end{equation}
where $\langle \cdot,\cdot \rangle_H$ is an inner product on $H$. Subsequently, the function $f$ is orthogonally projected onto the subspace spanned by the chosen elements, updating the residual in the process. DeVore et al stipulated bounds on the error of approximation $\Vert f - f_n\Vert_H$, expressed in terms of the best $n$-term approximation from $\mathbb{D}$, denoted by $\sigma_n(f, \mathbb{D})$. They establish that under appropriate conditions related to the coherence properties of the dictionary, the OGA maintains an approximation quality comparable to that of the optimally selected $n$-term representation.

Assuming $c$ in (\ref{eq:indefinite}) is not an eigenvalue, the equation \eqref{eq:indefinite} admits a unique solution. However, $\langle \cdot,\cdot \rangle_H$ induced from the equation will not be an inner product. Thus, it does not induce a norm $\|\cdot\|_H$ either. Our numerical tests show that still OGA works. More precisely, let the $\mathcal{K}_1(\mathbb{D})$ space be the target space in the analysis of the greedy algorithm \cite{temlyakov2008greedy}. Given the dictionary $\mathbb{D}$ and a target function $u$, greedy algorithms approximate the target function by a finite linear combination of dictionary elements:
\begin{equation}
	u_n=\sum_{i=1}^n a_ig_i,\quad g_i\in \mathbb{D}.
\end{equation}
The OGA method is given by
\begin{equation}
\label{eq:OGA}
	u_0=0,\,g_n=\arg\max_{g\in \mathbb{D}}\vert \langle g,u_{n-1}-u\rangle_H\vert,\,u_n=P_n(u),
\end{equation}
where $P_n$ is a projection onto the span of $g_1,\cdots,g_n$. Note that $\langle \cdot,\cdot\rangle_H$ is induced by the weak form given in (\ref{eq:weak}), i.e.
\begin{equation}
\label{eq:inner}
\langle u, v\rangle_H = a(u,v) := (A\nabla u, \nabla v)+(cu,v), \quad \forall u, v.
\end{equation}
Again, it is not an inner product since it is indefinite. In whatsoever, we see that the numerical algorithm is well-defined. We note that 
\begin{align}\label{eq:product}
	\left\langle g, u_{n-1}-u\right\rangle_H &= (\nabla g,A\nabla(u_{n-1}-u))+(g, cu_{n-1}-cu)\notag\\
	&= (\nabla g,A\nabla u_{n-1})+(g,cu_{n-1})-(\nabla g,A\nabla u)-(g,cu) \notag\\
	&= (\nabla g,A\nabla u_{n-1})+(g,cu_{n-1})-(f,g) \notag\\
	&= \int_{\Omega}(\nabla g\cdot A\nabla u_{n-1}+cgu_{n-1}-fg){\rm{d}}\bm{x}.
\end{align}
According to (\ref{eq:product}), solving the argmax problem only depends on the numerical solution of the previous step, and the basis function $g_n$ on the $n$-th step is determined by the $g$ satisfying the argmax problem. To solve the argmax problem, we consider the following equivalent optimization problem:
\begin{equation}\label{eq:optimization_min}
	g_n=\arg\min_{g\in \mathbb{D}}-\frac{1}{2}\langle g,u_{n-1}-u\rangle_H^2.
\end{equation}
Here we choose the shallow neural network dictionary $\mathbb{D}\subset\mathbb{R}^d$ as $\mathbb{D}=\mathbb{P}_k^d$, which is parameterized by $\bm{\omega}\in S^{d-1}$ and $b\in [-c,c]$. The optimization problem (\ref{eq:optimization_min}) is non-convex so it would be difficult to obtain the global minimum. The approach is to obtain a suitable initial guess by choosing many initial samples on the $\bm{\omega}-b$ parameter and evaluating the function (\ref{eq:optimization_min}) to find the smallest one. After the initial guess is chosen, we use Newton's or gradient descent method for further optimization. 

The key to building up the dictionary $\mathbb{D}$ is to find the best initial samples at $(\bm{\omega}_i, b_j)$. For parameter $\bm{\omega}_i$, in order to enforce the constraint $\Vert\bm{\omega}_i\Vert=1$, we take $\omega_i= 1$ or $-1$ for 1D case. For 2D case, \cite{siegel2023greedy} proposed $\bm{\omega}_i=(\cos\theta_i,\sin\theta_i), \theta_i=\frac{2\pi i}{N_\theta}, (i=0,1,\cdots,N_\theta)$ where $N_\theta$ is the number of sampling point of $\theta$ with high computational cost. Instead, we choose $\bm{\omega}_i=(\pm 1, \pm 1)$ which works much faster with higher accuracy. This strategy decreases the computational complexity from $O(N_\theta N_b)$ to $O(4N_b)$. Similarly, $\bm{\omega}_i=(\pm 1, \pm 1, \pm 1)$ for 3D case. For parameter $b_j$, we choose $b_j=-c+\frac{2cj}{N_b}\,(j=0,1,\cdots,N_b)$, where $N_b$ is the number of sampling point of $b_j$. 

For the projection step, due to (\ref{eq:weak}) and
\begin{equation}
	u_n=P_n(u)=\sum_{i=1}^n a_ig_i,
\end{equation}
where $a_i$ are obtained by solving the following linear problem: 
\begin{equation}
\label{eq:linear}
\int_{\Omega} A\nabla \left ( \sum_{i=1}^n a_i g_i \right )\cdot\nabla g_j {\rm{d}}\bm{x} + \int_{\Omega} c \left ( \sum_{i=1}^na_i g_i \right )g_j {\rm{d}}\bm{x} = \int_{\Omega} fg_j{\rm{d}}\bm{x}, \quad \forall j =1,\cdots,n. 
\end{equation}

\section{A Priori Error Estimate and Convergence Analysis}
In this section, we first discuss the error analysis of the neural network approximation to the indefinite problem. Then we analyze different components separately. The variational formulation of (\ref{eq:indefinite}) is equivalent to find $u\in H^1(\Omega)$ such that
\begin{equation}
u = \arg\min_{v\in H^1(\Omega)}\mathcal{R}(v),
\end{equation}
where the energy functional $\mathcal{R}(v)$ is given as follows:
\begin{equation}
\mathcal{R}(v) = \frac{1}{2} a(u,u) - (f,u).
\end{equation}
We remark $\mathcal{F}_{\Theta}$ the function class based on the shallow neural network dictionary $\mathbb{D}$. The corresponding function space is $V_n$ where $n$ is the number of neurons. The goal is to optimize the bounded generalization error:
\begin{equation}
\label{eq:generalerror}
	\mathcal{R}(u_{\Theta,N})-\mathcal{R}(u)= \mathcal{R}(u_{\Theta})-\mathcal{R}(u)+ \mathcal{R}(u_{\Theta,N})-\mathcal{R}(u_{\Theta}),
\end{equation}
where $u_{\Theta}$ is the minimizer of the true risk over the function class $\mathcal{F}_{\Theta}$, $u$ is the global minimizer of the risk and $u_{\Theta,N}$ is $u_{\Theta}$ calculated by Gauss quadrature. Through (\ref{eq:generalerror}), the priori error of the method is divided into two parts:
\begin{itemize}
	\item $\mathcal{R}(u_{\Theta})-\mathcal{R}(u)$ is the modelling error. It measures the approximation ability of the model class $\mathcal{F}_{\Theta}$ to capture the true solution $u$.
	\item $\mathcal{R}(u_{\Theta,N})-\mathcal{R}(u_{\Theta})$ is the discretization error. It measures the error incurred by the numerical quadrature. 
\end{itemize}
We analyze the different error terms separately in the following subsections.

\subsection{Modelling Error}\label{sec:modeling}
Through the bilinear form defined in (\ref{eq:inner}), the pseudo-norm $\Vert \cdot \Vert_H^2$ is induced as
\begin{equation}
\Vert u \Vert_H^2 := a(u,u). 
\end{equation}
Thus, the modeling error is transformed to 
\begin{equation}
	\mathcal{R}(u_{\Theta})-\mathcal{R}(u) = \frac{1}{2}\Vert u-u_{\Theta} \Vert_H^2=a(u-u_{\Theta},u-u_{\Theta}),
\end{equation}
where $a(u,v)$ is the symmetric bilinear form of (\ref{eq:indefinite}). It should be noted that $a(u,v)$ is bounded, i.e., there exists $c > 0$ such that 
\begin{equation}\label{cont} 
|a(u,v)| \leq c \Vert u\Vert _1\Vert v\Vert _1 \quad \forall u, v \in H^1(\Omega).     
\end{equation}
The crucial point of the indefinite problem is the way to handle the lack of coerciveness. First, we introduce the $\rm{G\mathring{a}rding's\ inequality}$:
\begin{lemma}[$\rm{G\mathring{a}rding's\ inequality}$]
\label{lm:garding}
	Suppose (\ref{eq:positive}) holds, there exists a constant $G\geq 0$, such that
	\begin{equation}
	\label{eq:gardinglemma}
		a(u,u) \geq \beta \Vert u\Vert_1^2 - G\Vert u\Vert_0^2, \quad \forall u\in H^1(\Omega). 
	\end{equation}
\end{lemma}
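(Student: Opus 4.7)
The plan is to prove Gårding's inequality by a direct two-term split of $a(u,u)$ into the principal (second-order) part and the zeroth-order part, bounding each separately and then absorbing the resulting $L^2$ discrepancy into the constant $G$.

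First I would write, using the definition in \eqref{eq:inner},
\begin{equation*}
a(u,u) = \int_\Omega \nabla u^{T} A \nabla u \,\mathrm{d}\bm{x} + \int_\Omega c\, u^2 \,\mathrm{d}\bm{x}.
\end{equation*}
For the principal part, I apply the uniform ellipticity hypothesis \eqref{eq:positive} pointwise to the vector $\xi = \nabla u(\bm{x})$, obtaining $\int_\Omega \nabla u^T A \nabla u\, \mathrm{d}\bm{x} \geq \beta |u|_1^2$. For the zeroth-order part, I use that the coefficient $c$ is assumed bounded (in $L^\infty(\Omega)$, with some bound $C_0 := \|c\|_{L^\infty(\Omega)}$), which gives $\int_\Omega c u^2\, \mathrm{d}\bm{x} \geq -C_0 \|u\|_0^2$ regardless of the sign of $c$. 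Combining these two estimates yields
\begin{equation*}
a(u,u) \geq \beta |u|_1^2 - C_0 \|u\|_0^2.
\end{equation*}

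The key (and only) subtlety is that the ellipticity bound only produces the $H^1$-seminorm $|u|_1$, while the statement of the lemma demands the full $H^1$-norm $\|u\|_1$. I would handle this using the elementary identity $\|u\|_1^2 = |u|_1^2 + \|u\|_0^2$, so that $\beta |u|_1^2 = \beta \|u\|_1^2 - \beta \|u\|_0^2$. Substituting gives
\begin{equation*}
a(u,u) \geq \beta \|u\|_1^2 - (\beta + C_0)\|u\|_0^2,
\end{equation*}
and the inequality \eqref{eq:gardinglemma} follows by setting $G := \beta + C_0 \geq 0$.

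There is no real obstacle here: the argument is classical and relies only on the uniform ellipticity assumption \eqref{eq:positive} together with boundedness of $c$. The only point worth flagging is that the negativity of $c$ (which is what makes the bilinear form indefinite and motivates Gårding's inequality in the first place) is not actually needed for the proof of the lemma itself — the inequality holds for any bounded $c$, and it is precisely the presence of the negative $-G\|u\|_0^2$ term on the right-hand side that encodes the loss of coerciveness the rest of the paper must contend with.
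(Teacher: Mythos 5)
Your proof is correct and follows essentially the same route as the paper's: apply uniform ellipticity pointwise to get $\beta|u|_1^2$, bound the zeroth-order term below via a pointwise bound on $c$ (the paper uses $\gamma = \operatorname{ess\,inf} c$ where you use $-\|c\|_{L^\infty}$, a cosmetically larger but equally valid constant), and absorb the resulting $\beta\|u\|_0^2$ discrepancy into $G$. Your closing observation that the sign of $c$ is irrelevant to the lemma itself is also accurate.
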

\begin{proof}
	\begin{align}
	\label{eq:garding}
		a(u,u)+G\Vert u\Vert_0^2 &\geq \beta \Vert \nabla u\Vert^2 + c\Vert u\Vert^2 + G\Vert u\Vert_0^2 \notag\\
		&\geq \beta \Vert \nabla u\Vert^2 + \gamma\Vert u\Vert^2 + G\Vert u\Vert_0^2,
	\end{align}
where 
$$\gamma:={\rm{ess}} \inf\{c(x):x\in \Omega\}.$$
Since $c(x)$ is negative in indefinite problems, $\gamma$ is also negative. By choosing $G\geq \beta-\gamma>0$ combined with (\ref{eq:garding}),
\begin{equation}
	a(u,u)+G\Vert u\Vert_0^2 \geq \beta \Vert u\Vert_1^2.
\end{equation}
The proof is completed.
\end{proof}
According to Lemma \ref{lm:garding}, we have
\begin{equation}
\label{eq:errorbound}
	a(u-u_{\Theta},u-u_{\Theta}) \geq \beta \Vert u-u_{\Theta}\Vert_1^2 - G\Vert u-u_{\Theta}\Vert_0^2.
\end{equation} 
If the $L^2$ error can be bounded by the $H^1$ error similar to the Aubin-Nitsche trick in finite element theory, the $H^1$ error can be controlled by the modeling error. Further, it is bounded through (\ref{cont}). We shall need a couple of lemmas to establish the theorem. The first lemma \cite{CiCP-28-1707} is given as follows: 
\begin{lemma}
\label{lm:neuron}
For $\forall u \in H^1(\Omega)$ and $\varepsilon > 0$, we can find $\chi \in V_n$ such that the number of neurons $n$ depends on $u$ and $\varepsilon$, 
\begin{equation}
\Vert u - \chi\Vert _1 \leq \varepsilon. 
\end{equation}
\end{lemma}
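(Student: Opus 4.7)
The plan is to establish the lemma as a two-step density argument combining classical Sobolev-space approximation with a universal approximation statement for shallow networks. First I would approximate $u \in H^1(\Omega)$ by a more regular function $\tilde u$ (say $\tilde u \in C^\infty(\overline{\Omega})$, or more conveniently an element of the Barron-type class $\mathcal{K}_1(\mathbb{D})$) satisfying $\Vert u - \tilde u \Vert_1 \leq \varepsilon/2$. This step relies only on the standard density of smooth functions in $H^1(\Omega)$ for domains with a reasonable boundary (e.g.\ Lipschitz), combined with the fact that a smooth function on a bounded domain can, after extension and mollification with a kernel whose Fourier data lies in $L^1$, be placed in $\mathcal{K}_1(\mathbb{P}_k^d)$.

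Next I would invoke a shallow neural network approximation theorem in the $H^1$ norm to produce $\chi \in V_n$ with $\Vert \tilde u - \chi \Vert_1 \leq \varepsilon/2$. The key observation that makes $H^1$-approximation (as opposed to merely $L^2$) available for the ReLU$^k$ dictionary with $k \geq 1$ is the pointwise identity
\begin{equation}
\partial_{x_j} \sigma_k(\bm{\omega}\cdot \bm{x} + b) = k\,\omega_j\,\sigma_{k-1}(\bm{\omega}\cdot \bm{x} + b),
\end{equation}
so that derivatives of dictionary elements are, up to rescaling, elements of a ReLU$^{k-1}$ dictionary. Consequently any $n$-term $L^2$ approximation bound for $\tilde u$ is automatically accompanied by an $L^2$ bound for its gradient, and the rate results cited earlier (for instance \cite{siegel2022high, siegel2023characterization}) yield $\Vert \tilde u - \chi \Vert_1 \lesssim n^{-s}\Vert \tilde u \Vert_{\mathcal{K}_1(\mathbb{D})}$ for some $s > 0$, which can be driven below $\varepsilon/2$ by choosing $n$ sufficiently large depending on $\tilde u$ and $\varepsilon$.

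The lemma then follows by the triangle inequality $\Vert u - \chi \Vert_1 \leq \Vert u - \tilde u\Vert_1 + \Vert \tilde u - \chi\Vert_1 \leq \varepsilon$, with $n = n(u, \varepsilon)$ determined through $\tilde u$. The principal obstacle is the second step: one must carefully verify that $H^1$ (rather than only $L^2$) approximation rates are available for the precise dictionary $\mathbb{P}_k^d$ employed here, and that the intermediate function $\tilde u$ genuinely lies in $\mathcal{K}_1(\mathbb{P}_k^d)$ so that the quantitative bound applies. The first step, by contrast, is routine Sobolev-space density.
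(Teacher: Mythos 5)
The paper offers no proof of this lemma at all --- it is quoted verbatim from \cite{CiCP-28-1707} --- so there is no internal argument to compare against. Your two-step sketch (extend and mollify $u$ so the regularized function lands in $\mathcal{K}_1(\mathbb{P}_k^d)$ via the Fourier-integrability criterion, then invoke the $H^1$ approximation rates for that class, which are available precisely because $\partial_{x_j}\sigma_k(\bm{\omega}\cdot\bm{x}+b)=k\,\omega_j\,\sigma_{k-1}(\bm{\omega}\cdot\bm{x}+b)$ keeps gradients inside a ReLU$^{k-1}$ dictionary) is the standard route and is essentially how the cited reference and \cite{siegel2022high,siegel2023characterization} establish the result; the only points needing the usual care are the implicit Lipschitz-boundary assumption for the Sobolev extension and the representation of the degree-$\le k$ polynomial correction by finitely many dictionary elements on the bounded domain, both routine.
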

We begin with the compactness result \cite{schatz1996some} for the solution space in $H^1(\Omega)$. 
\begin{lemma}
\label{lm:precompact}
Let $D = \{f : f \in L^2(\Omega), \Vert f\Vert _0 = 1\}$ be the unit sphere in $L^2(\Omega)$. Let $W = \{u: u = Tf, f \in D\}$ where $u = Tf \in H^1(\Omega)$, such that 
\begin{equation}
a(Tf,v) = a(u,v) = (f,v), \quad \forall v \in H^1(\Omega). 
\end{equation}
Then $W$ is precompact in $H^1(\Omega)$. 
\end{lemma}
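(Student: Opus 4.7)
The plan is to show that the solution operator $T$, viewed as a map $L^2(\Omega)\to H^1(\Omega)$, is in fact compact, which immediately forces $W=T(D)$ to be precompact in $H^1(\Omega)$ because $D$ is bounded in $L^2(\Omega)$. I will factor $T$ through $H^2(\Omega)$: first establish that $T$ is a bounded operator into $H^2(\Omega)$, and then compose with the compact Rellich--Kondrachov embedding $H^2(\Omega)\hookrightarrow H^1(\Omega)$.

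The first step is to verify that $T:L^2(\Omega)\to H^1(\Omega)$ is well-defined and bounded. Since $c$ is assumed not to be an eigenvalue, the weak problem $a(Tf,v)=(f,v)$ has a unique solution in $H^1(\Omega)$ for every $f\in L^2(\Omega)$. Boundedness follows from G\aa rding's inequality (Lemma \ref{lm:garding}) combined with the Fredholm alternative: writing $u=Tf$, one has
\begin{equation}
\beta\Vert u\Vert_1^2\leq a(u,u)+G\Vert u\Vert_0^2=(f,u)+G\Vert u\Vert_0^2\leq \Vert f\Vert_0\Vert u\Vert_1+G\Vert u\Vert_0^2,
\end{equation}
and the uniqueness assumption together with the compact embedding $H^1(\Omega)\hookrightarrow L^2(\Omega)$ prevents a sequence with $\|u_n\|_1=1$ and $\|f_n\|_0\to 0$ from existing (otherwise a weakly convergent subsequence would yield a nonzero solution of the homogeneous problem). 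Hence $\Vert Tf\Vert_1\leq C\Vert f\Vert_0$.

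The second step upgrades this estimate to $H^2$ via standard elliptic regularity for the Neumann problem. With $A$ symmetric positive definite and the domain $\Omega$ assumed sufficiently smooth, the equation $-\nabla\cdot(A\nabla u)+cu=f$ with homogeneous Neumann data and $f\in L^2(\Omega)$ produces a solution in $H^2(\Omega)$ satisfying
\begin{equation}
\Vert Tf\Vert_2\leq C\bigl(\Vert f\Vert_0+\Vert Tf\Vert_1\bigr)\leq C'\Vert f\Vert_0,
\end{equation}
where the last inequality uses the $H^1$ bound from the first step. Thus $T:L^2(\Omega)\to H^2(\Omega)$ is bounded.

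Finally, by the Rellich--Kondrachov theorem, the embedding $H^2(\Omega)\hookrightarrow H^1(\Omega)$ is compact, so the composition $T:L^2(\Omega)\to H^1(\Omega)$ is a compact operator. Since $D$ is bounded in $L^2(\Omega)$, its image $W=T(D)$ is precompact in $H^1(\Omega)$. The main obstacle is the first step: extracting a clean $H^1$ bound from the indefinite bilinear form, because coerciveness fails and one must lean on the non-eigenvalue hypothesis together with the compactness of $H^1\hookrightarrow L^2$ to absorb the $G\Vert u\Vert_0^2$ term via a Fredholm-type argument.
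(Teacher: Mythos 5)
Your argument is correct, but it takes a genuinely different route from the paper's. You place the compactness on the \emph{target} side: you first prove the a priori bound $\Vert Tf\Vert_1\leq C\Vert f\Vert_0$ via G\aa rding's inequality and a Fredholm-type contradiction argument (this step is sound), then invoke elliptic regularity to get $T:L^2(\Omega)\to H^2(\Omega)$ bounded, and finish with the compact embedding $H^2(\Omega)\hookrightarrow H^1(\Omega)$. The paper instead places the compactness on the \emph{source} side: it uses only the stability estimate $\Vert Tf\Vert_1\leq c_4\Vert f\Vert_{-1}$, i.e.\ continuity of $T$ from $H^{-1}(\Omega)$ to $H^1(\Omega)$, and the fact that the $L^2$ unit sphere $D$ is precompact in $H^{-1}(\Omega)$ (the dual statement of Rellich), so that $W=T(D)$ is the continuous image of a precompact set. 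The trade-off is that your route buys a quantitatively stronger conclusion ($T$ is a compact operator on $L^2$, with an explicit $H^2$ bound) but at the price of extra hypotheses that the paper does not impose: $H^2$ regularity for the Neumann problem requires $\partial\Omega$ to be, say, $C^{1,1}$ or convex and $A$ to be Lipschitz, whereas the lemma as stated only assumes a bounded domain and bounded measurable coefficients. The paper's argument needs nothing beyond the well-posedness estimate in the $H^{-1}$ norm, so it covers the general setting; if you want your proof to apply at the same level of generality, you should either add the regularity hypotheses explicitly or replace the $H^2$ step by the $H^{-1}$ duality argument.
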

\begin{proof}
The proof follows from the following inequality:
\begin{equation}
\Vert u\Vert _1 = \Vert Tf\Vert _1 \leq c_4 \Vert f\Vert _{-1}, \quad \forall f \in D. 
\end{equation}
This means, $T$ is continuous from $H^{-1}(\Omega)$ to $H^1(\Omega)$. Thus, $W$ is precompact in $H^1(\Omega)$. 
This completes the proof. 
\end{proof}
We now establish Lemma \ref{lm:compact} that compact subsets $V\in H^1(\Omega)$ can be uniformly approximated by elements in $V_n\in H^1(\Omega)$. 
\begin{lemma}
\label{lm:compact}
Let $V$ be a compact subset of $H^1(\Omega)$. Then given any $\varepsilon > 0$, there exists an $n_0(\varepsilon, V)$ such that for $\forall v \in V$ and $\forall n > n_0$, there exists a $\chi \in V_n$ such that 
\begin{equation}
\Vert v - \chi\Vert_1 \leq \varepsilon. 
\end{equation}
\end{lemma}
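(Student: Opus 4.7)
The plan is to exploit the compactness of $V$ to reduce a uniform approximation statement to a finite one, and then apply Lemma \ref{lm:neuron} finitely many times. I would proceed in four steps.

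First, fix $\varepsilon > 0$. Since $V$ is compact in $H^1(\Omega)$, it is totally bounded, so there exist finitely many centers $v_1,\dots,v_N \in V$ (with $N = N(\varepsilon,V)$) such that
\begin{equation}
V \subset \bigcup_{i=1}^{N} B_1\!\left(v_i,\tfrac{\varepsilon}{2}\right),
\end{equation}
where $B_1(v_i,\varepsilon/2)$ denotes the open $H^1$-ball of radius $\varepsilon/2$ around $v_i$.

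Second, for each $i = 1,\dots,N$, apply Lemma \ref{lm:neuron} with the function $v_i$ and tolerance $\varepsilon/2$ to produce an integer $n_i = n_i(v_i,\varepsilon/2)$ and an element $\chi_i \in V_{n_i}$ with $\|v_i - \chi_i\|_1 \leq \varepsilon/2$. Define
\begin{equation}
n_0(\varepsilon,V) := \max_{1 \leq i \leq N} n_i.
\end{equation}
Here I rely on the (natural) nestedness $V_m \subset V_n$ for $m \leq n$, which holds for the shallow neural network spaces because extra neurons may be assigned zero outer coefficients; thus each $\chi_i$ lies in $V_n$ for every $n \geq n_0$.

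Third, for any $v \in V$ and any $n > n_0$, pick an index $i$ such that $\|v - v_i\|_1 \leq \varepsilon/2$, and choose $\chi := \chi_i \in V_n$. The triangle inequality then gives
\begin{equation}
\|v - \chi\|_1 \leq \|v - v_i\|_1 + \|v_i - \chi_i\|_1 \leq \tfrac{\varepsilon}{2} + \tfrac{\varepsilon}{2} = \varepsilon,
\end{equation}
which is the desired uniform bound.

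I do not anticipate a serious obstacle here: the only subtle ingredient is the nestedness of the spaces $V_n$, which is essentially a modeling convention for shallow networks and is implicitly used throughout the paper. The real content of the lemma is entirely packaged into the compactness hypothesis plus the pointwise approximation statement of Lemma \ref{lm:neuron}; combining them via a finite net is the standard and only natural route.
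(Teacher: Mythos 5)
Your proof is correct and follows essentially the same route as the paper's: a finite $\varepsilon/2$-net from compactness, Lemma \ref{lm:neuron} applied to each center, $n_0$ taken as the maximum of the resulting $n_i$, and the triangle inequality. Your explicit remark about the nestedness $V_m \subset V_n$ is a point the paper leaves implicit, but it is the standard convention and does not change the argument.
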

\begin{proof}
Since $V$ is a compact, for given $\varepsilon/2$, there exist $v_1, \cdots, v_M \in H^1(\Omega)$ such that 
\begin{equation}
V \subset \bigcup_{i=1}^M \rho(\frac{\varepsilon}{2};v_i), 
\end{equation}
where $\rho(\varepsilon/2;v_i)$ is the ball of radius $\varepsilon/2$ centered at $v_i$ in $H^1(\Omega)$. Then, according to Lemma \ref{lm:neuron}, there exists $n_i$ such that $\chi_i \in V_{n_i}$ such that 
\begin{equation}
\Vert v_i - \chi_i\Vert_1 \leq \varepsilon/2,  
\end{equation}
Now, remark $n_0 = \max \{n_1, \cdots, n_M\}$, for $v \in V$, we have 
\begin{equation}
\Vert v - \chi_j\Vert_1 \leq \Vert v - v_j\Vert _1 + \Vert v_j - \chi_j\Vert _1 \leq \varepsilon. 
\end{equation}
This completes the proof. 
\end{proof}

The finite neuron method is to find $u_{\Theta} \in V_n$ such that 
\begin{equation}\label{dis}
a(u_{\Theta}, \phi) = (f,\phi) \quad \forall \phi \in V_n. 
\end{equation}
We note that the quasi-orthogonality holds: 
\begin{equation}\label{or}
a(u-u_{\Theta}, \phi) = 0, \quad \forall \phi \in V_n. 
\end{equation}
The main theorem is given as follows:
\begin{theorem}
\label{thm:thick}
Suppose that $a(\cdot,\cdot)$ satisfies the inequalities \ref{cont} and \ref{eq:gardinglemma} while $u \in H^1(\Omega)$ and $u_{\Theta} \in V_n$ satisfy \ref{dis} and \ref{or}. Then, given $\forall \varepsilon > 0$, there exists an $n_0 = n_0(\varepsilon) > 0$ such that for $\forall n > n_0(\varepsilon)$, we have 
\begin{equation}
\Vert u - u_{\Theta}\Vert _0 \leq \varepsilon \Vert u-u_{\Theta}\Vert _1. 
\end{equation}
\end{theorem}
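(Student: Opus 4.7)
The plan is to run an Aubin--Nitsche-style duality argument, with the role that interpolation estimates play in the classical finite element theory taken over by the density/compactness pair of Lemmas \ref{lm:neuron}--\ref{lm:compact}. Writing $e := u - u_{\Theta}$ and using the duality characterization
$$\Vert e\Vert_0 = \sup_{f \in D} (e, f),$$
where $D$ is the unit sphere in $L^2(\Omega)$ from Lemma \ref{lm:precompact}, first I would fix $f \in D$ and let $w := Tf \in H^1(\Omega)$ solve the dual problem $a(w, v) = (f, v)$ for all $v \in H^1(\Omega)$. Because $a(\cdot,\cdot)$ is symmetric, this is the original PDE with right-hand side $f$, and it is well-defined under the standing assumption that $c$ is not an eigenvalue.

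For any $\chi \in V_n$, Galerkin orthogonality \ref{or} and continuity \ref{cont} yield
$$(e, f) = a(e, w) = a(e, w - \chi) \leq c\, \Vert e\Vert_1\, \Vert w - \chi\Vert_1.$$
The problem thus reduces to making $\Vert w - \chi\Vert_1$ small \emph{uniformly} in $f \in D$. Lemma \ref{lm:precompact} provides exactly that: the set $W = \{Tf : f \in D\}$ is precompact in $H^1(\Omega)$. Applying Lemma \ref{lm:compact} to the compact closure of $W$, for any $\varepsilon' > 0$ there exists $n_0(\varepsilon')$ such that whenever $n > n_0$ and $w \in W$, one can pick $\chi \in V_n$ with $\Vert w - \chi\Vert_1 \leq \varepsilon'$. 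Taking the infimum over $\chi \in V_n$ and then the supremum over $f \in D$ yields $\Vert e\Vert_0 \leq c\,\varepsilon'\,\Vert e\Vert_1$, and setting $\varepsilon' = \varepsilon/c$ closes the argument.

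The main obstacle is that, unlike in classical FEM, we do not have a quantitative interpolation bound of the form $\Vert w - \chi\Vert_1 \lesssim h\Vert w\Vert_2$; all that is available is the pointwise density in Lemma \ref{lm:neuron}. The technical heart of the proof is therefore the upgrade from this pointwise density to a \emph{uniform} approximation rate over the image set $W$, and this upgrade goes through precisely because $W$ is compact---not merely bounded---in $H^1(\Omega)$, which is what Lemma \ref{lm:precompact} supplies. This is also why the estimate is qualitative in $n$: it produces an arbitrarily small constant $\varepsilon$ for $n$ sufficiently large, but no explicit algebraic rate, since a rate would require quantitative best $n$-term approximation in the $\mathcal{K}_1(\mathbb{D})$ framework rather than the soft compactness argument exploited here.
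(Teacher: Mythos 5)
Your proposal is correct and follows essentially the same route as the paper: the Aubin--Nitsche duality argument, with the identity $\Vert e\Vert_0 = \sup_{\Vert g\Vert_0=1}(e,g)$, Galerkin orthogonality \eqref{or}, continuity \eqref{cont}, and the precompactness of the dual solution set (Lemma \ref{lm:precompact}) feeding into the uniform approximation result of Lemma \ref{lm:compact}. The only cosmetic difference is that the paper phrases the dual problem via the adjoint form $a^*(u,v)=a(v,u)$, which by symmetry coincides with the primal problem as you observe; your closing remark on why compactness (rather than mere boundedness) of $W$ is the essential ingredient matches the logic of the paper's argument.
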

\begin{proof}
We consider the adjoint bilinear form to be $a(u,v)$ defined by 
\begin{equation}
a^*(u,v) = a(v,u), \quad \forall u,v \in H^1(\Omega). 
\end{equation}
The adjoint problem has a unique solution in $H^1(\Omega)$ for $\forall g \in H^{-1}(\Omega)$ such that 
\begin{equation}
a^*(w^*,v) = (g,v), \quad \forall v \in H^1(\Omega)     
\end{equation}
and 
\begin{equation}
\Vert w^*\Vert _1 \leq c\Vert g\Vert _{-1}.     
\end{equation}
We consider the solution set for a given $g$ in the unit sphere in $L^2(\Omega)$. Namely, 
\begin{equation}
W^* = \{ w^* : a^*(w^*,v) = (g,v) \quad \forall v \in H^1(\Omega) \mbox{ with } \Vert g\Vert _0=1\}.     
\end{equation}
Then, $W^*$ is precompact according to Lemma \ref{lm:precompact}. We now invoke the identity: 
\begin{equation}
\Vert u - u_{\Theta}\Vert _0 = \sup_{g \in L^2(\Omega), \Vert g\Vert _0 = 1} (u - u_{\Theta}, g). 
\end{equation}
Furthermore, due to (\ref{cont}), for $\forall \chi \in V_n$, we have 
\begin{equation}
(u - u_{\Theta}, g) = a^*(w^*, u - u_{\Theta}) = a(u - u_{\Theta}, w^* - \chi) \leq c \Vert u - u_{\Theta}\Vert _1 \Vert w^* - \chi\Vert _1.     
\end{equation}
Combined with Lemma \ref{lm:compact}, this shows that 
\begin{equation}
\Vert u - u_{\Theta}\Vert_0 \leq \varepsilon \Vert u - u_{\Theta}\Vert_1.     
\end{equation}
This completes the proof. 
\end{proof}
Combined with (\ref{eq:errorbound}) and Theorem \ref{thm:thick}, we have 
\begin{equation}
	\Vert u - u_{\Theta}\Vert_1^2 \leq \frac{1}{\beta-G\varepsilon}a(u-u_{\Theta},u-u_{\Theta})
\end{equation}
with the bounded bilinear form due to (\ref{cont}). This in turns leads to the quasi-optimality given as follows:
\begin{equation}
\|u - u_{\Theta}\|_1 \lesssim \inf_{\chi \in V_n} \|u - \chi\|_1. 
\end{equation}
Therefore, the $L^2$ and $H^1$ errors can be controlled optimally. 

\subsection{Discretization Error}\label{sec:discretization}
In this section, we introduce the discretization error incurred by the Gauss-Legendre quadrature. Let $\mathcal{T}_h \subset \Omega$ be a partition on $\Omega$ with mesh size $h$, where $h=\mathcal{O}(N^{-\frac{1}{d}})$ and $N$ is the number of quadrature points. For each $T_l\in \mathcal{T}_h$, $l=1,\cdots,L$, the quadrature rule satisfies
\begin{equation}
    \int_{T_l}p(x)dx=\sum_{i=1}^t p(x_{l,i})\omega_i, \quad \forall p\in\mathcal{P}_{2t-1}(T_l),
\end{equation}
where $\mathcal{P}_{2t-1}(T)$ is the space of polynomials with degree less equal than $2t-1$. Therefore, we have
\begin{equation}
\label{eq:quad}
    \int_{\Omega}p(x)dx=\sum_{l=1}^L\int_{T_l}p(x)dx=\sum_{l=1}^L\sum_{i=1}^t p(x_{l,i})\omega_i=\sum_{j=1}^Np(x_j)\omega_j,\quad \forall p\in \mathcal{P}_{2t-1}(\mathcal{T}_h),
\end{equation}
where $\mathcal{P}_{2t-1}(\mathcal{T}_h)=\{g\in L^2(\Omega):g\vert_T\in \mathcal{P}_{2t-1}(T),\forall T\in \mathcal{T}_h \}$ is the space of piece-wise polynomial functions on the partition $\mathcal{T}_h$. The estimate of the discretization error is under the following assumptions:
\begin{itemize}
	\item the coefficients and $f$ in (\ref{eq:indefinite}) satisfy $\Vert A\Vert_{W^{r,\infty}(\Omega)}$, $\Vert c\Vert_{W^{r,\infty}(\Omega)}$, $\Vert f\Vert_{W^{r,\infty}(\Omega)}\leq K_r$,
	\item the dictionary $\mathbb{D}$ satisfies $\sup\limits_{d\in\mathbb{D}}\Vert d\Vert_{W^{r+2,\infty}(\Omega)}=C<\infty$,
	\item the discretization solution $u\in \mathcal{K}_1(\mathbb{D})$ with $\Vert u \Vert_{ \mathcal{K}_1(\mathbb{D})}\leq M$, i.e. $u\in B_M(\mathbb{D})$.
\end{itemize} 
Since 
\begin{equation}
	p(x)=\frac{1}{2}A(\nabla u(x))^2+\frac{1}{2}c(u(x))^2-f(x)u(x), \quad \forall u\in W^{r+2}(\Omega),
\end{equation}
we have
\begin{equation}
	\left\vert \mathcal{R}(u_{\Theta,N})-\mathcal{R}(u_{\Theta})\right\vert = \left\vert \int_{\Omega}p(x)dx-\sum_{j=1}^N p(x_j)\omega_j\right\vert.
\end{equation}
Using the Bramble-Hilbert Lemma \cite{bramble1970estimation}, we get
\begin{equation}
	\left\vert \int_{\Omega}p(x)dx-\sum_{j=1}^N p(x_j)\omega_j\right\vert \leq C_mN^{-\frac{r+1}{d}}\Vert p\Vert_{r+1,\infty}, \quad \forall p\in W^{r+1,\infty}(\Omega).
\end{equation}
Due to the coefficients and $f$ are bounded in the assumptions, $p(x)$ is bounded by
\begin{equation}
	\Vert p\Vert_{r+1,\infty}\leq C_r K_r \Vert u\Vert_{r+2,\infty}^2.
\end{equation}
Therefore,
\begin{equation}
	\left\vert \int_{\Omega}p(x)dx-\sum_{j=1}^N p(x_j)\omega_j\right\vert \leq C_m C_r K_r N^{-\frac{r+1}{d}} \Vert u\Vert_{r+2,\infty}^2.
\end{equation}
Since we have $u\in B_M(\mathbb{D})$, thus $\Vert u\Vert_{r+2,\infty}\leq CM$. It indicates that
\begin{equation}
	\left\vert \int_{\Omega}p(x)dx-\sum_{j=1}^N p(x_j)\omega_j\right\vert \leq C_m C_r C^2 K_r N^{-\frac{r+1}{d}} M^2,
\end{equation}
i.e. the discretization error is bounded.

\section{Numerical Experiments}
In this section, we provide numerical experiments demonstrating the effectiveness of the greedy algorithm in solving indefinite elliptic problems. For all the experiments below, the numerical quadrature uses Gaussian quadrature (\ref{eq:quad}) with the default setting $t=2$ and $L=4000$ for 1D, $t=2\times2$, $L=400\times400$ for 2D and $t=2\times2\times2$, $L=50\times50\times50$ for 3D. We use $u_h$ to denote the numerical solution and use $u$ to represent the analytical solution. We calculate the $L^2$ and $H^1$ error and their corresponding convergence orders. The numerical experiments are performed on a workstation with 2 24 Core Intel Xeon Platinum 8269CY CPU, 512GB RAM, and a Ubuntu 20.04 operating system that implements MATLAB.

\subsection{Example 1}
We consider the 1D elliptic equation
\begin{equation}
		\left\{
			\begin{aligned}
				 &-u''+cu=f,\quad x\in (-1,1), \\
				 &u'(-1)=u(1)=0,
			\end{aligned}
		\right.
	\end{equation}
with the analytical solution $u(x)=\cos(\pi x)$. We apply the orthogonal greedy algorithm (OGA) with dictionary $\mathbb{P}_2^1$ (i.e. corresponding to ${\rm{ReLU}}^2$). For the parameters of the dictionary, $\omega=\pm 1$ and $b\in [-2,2]$. We present $c=-1$ and $-1e+06$ respectively and Table \ref{table:1D_nega_c1}-\ref{table:1D_nega_c4} show the results. The results show that the convergence orders are constant with the increase of the number of neurons $n$.

\begin{table}[H]
\begin{center}
{{\begin{tabular}{|c|c|c|c|c|}
\cline{1-5}
$n$ & $\Vert u-u_h\Vert _{L^2}$ & order & $\Vert u-u_h\Vert _{H^1}$ & order \\ \hline 
\cline{1-5} 
16 & 6.740e-04 & - & 2.428e-02 & - \\ \hline 
32 & 6.793e-05 & 3.31 & 5.290e-03 & 2.20 \\ \hline 
64 & 7.699e-06 & 3.14 & 1.232e-03 & 2.10 \\ \hline 
128 & 8.990e-07 & 3.10 & 2.987e-04 & 2.04 \\ \hline 
256 & 1.117e-07 & 3.01 & 7.520e-05 & 1.99 \\ \hline
\end{tabular}
\caption{$L^2$ error, $H^1$ error and order for greedy algorithm solving 1D elliptic equation when $c=-1$}
\label{table:1D_nega_c1}
}}\end{center}
\end{table}

\begin{table}[H]
\begin{center}
{{\begin{tabular}{|c|c|c|c|c|}
\cline{1-5}
$n$ & $\Vert u-u_h\Vert _{L^2}$ & order & $\Vert u-u_h\Vert _{H^1}$ & order \\ \hline 
\cline{1-5} 
16 & 5.352e-04 & - & 2.296e-02 & - \\ \hline 
32 & 6.746e-05 & 2.99 & 5.518e-03 & 2.06 \\ \hline 
64 & 8.110e-06 & 3.06 & 1.353e-03 & 2.03 \\ \hline 
128 & 1.014e-06 & 3.00 & 3.352e-04 & 2.01 \\ \hline 
256 & 1.301e-07 & 2.96 & 8.568e-05 & 1.97 \\ \hline
\end{tabular}
\caption{$L^2$ error, $H^1$ error and order for greedy algorithm solving 1D elliptic equation when $c=-1e+06$}
\label{table:1D_nega_c4}
}}\end{center}
\end{table}

\subsection{Example 2}
We consider the 2D elliptic equation
\begin{equation}
		\left\{
			\begin{aligned}
				 &-\Delta u+cu=f,\quad x\in (0,1)^2, \\
				 &\frac{\partial u}{\partial n}=0, \quad x\in \partial(0,1)^2,
			\end{aligned}
		\right.
	\end{equation}
with the analytical solution $u(x,y)=\cos(10\pi x)\cos(10\pi y)$. We apply the orthogonal greedy algorithm (OGA) with dictionary $\mathbb{P}_2^2$ (i.e. corresponding to ${\rm{ReLU}}^2$). For the parameters of the dictionary, $\omega_i=\pm 1,i=1,2$ and $b\in [-2,2]$. We present $c=-1$ and $-1e+06$ respectively and Table \ref{table:2D_nega_c1}-\ref{table:2D_nega_c4} show the results. Note that the degrees of freedom (dof) of the shallow neural network are the number of parameters of the dictionary.

\begin{table}[H]
\begin{center}
{{\begin{tabular}{|c|c|c|c|c|c|}
\cline{1-6}
$n$ & dof& $\Vert u-u_h\Vert _{L^2}$ & order & $\Vert u-u_h\Vert _{H^1}$ & order \\ \hline 
\cline{1-6} 
16 & 48 & 4.131e-01 & - & 1.764e+01 & - \\ \hline 
32 & 96 & 2.709e-01 & 0.61 & 1.048e+01 & 0.75 \\ \hline 
64 & 192 & 2.433e-02 & 3.48 & 2.585e+00 & 2.02 \\ \hline 
128 & 384 & 4.591e-03 & 2.41 & 8.878e-01 & 1.54 \\ \hline 
256 & 768 & 7.009e-04 & 2.71 & 2.430e-01 & 1.87 \\ \hline
\end{tabular}
\caption{$L^2$ error, $H^1$ error and order for greedy algorithm solving 2D elliptic equation when $c=-1$}
\label{table:2D_nega_c1}
}}\end{center}
\end{table}
\begin{table}[H]
\begin{center}
{{\begin{tabular}{|c|c|c|c|c|c|}
\cline{1-6}
$n$ & dof& $\Vert u-u_h\Vert _{L^2}$ & order & $\Vert u-u_h\Vert _{H^1}$ & order \\ \hline 
\cline{1-6} 
16 & 48 & 4.245e-01 & - & 1.854e+01 & - \\ \hline 
32 & 96 & 2.787e-01 & 0.61 & 1.163e+01 & 0.67 \\ \hline 
64 & 192 & 2.232e-02 & 3.64 & 2.785e+00 & 2.06 \\ \hline 
128 & 384 & 4.515e-03 & 2.31 & 9.733e-01 & 1.52 \\ \hline 
256 & 768 & 6.633e-04 & 2.77 & 2.827e-01 & 1.78 \\ \hline
\end{tabular}
\caption{$L^2$ error, $H^1$ error and order for greedy algorithm solving 2D elliptic equation when $c=-1e+06$}
\label{table:2D_nega_c4}
}}\end{center}
\end{table}
Next, we present the result of FEM for different $c$ with different $h$. We use the standard triangle Lagrange elements $P_k\ (k=1,2)$ where $k$ denotes the degree of the polynomials. The degrees of freedom (dof) for $P_k$ FEM is $(k/h)^2-1$. The results show that under the same degree of freedom, the OGA method can reach much lower errors. 
\begin{table}[H]
\addtolength{\tabcolsep}{-3.5pt}
\begin{center}
{{\begin{tabular}{|c|c|c|c|c|c|c|c|c|c|c|}
\hline
\multirow{2}{*}{$h$} &  \multicolumn{5}{c|}{$P_1$ linear FEM} & \multicolumn{5}{c|}{$P_2$ quadratic FEM} \\ \cline{2-11}
& dof & $\Vert u-u_h\Vert _{L^2}$ & order & $\Vert u-u_h\Vert _{H^1}$ & order & dof & $\Vert u-u_h\Vert _{L^2}$ & order & $\Vert u-u_h\Vert _{H^1}$ & order \\ \hline 
\cline{1-11} 
1/8 & 63 & 1.375e+00 & - & 2.642e+01 & - & 255 & 4.270e-01 & - & 1.761e+01 & - \\ \hline 
1/16 & 255 & 4.699e-01 & 1.55 & 1.733e+01 & 0.61 & 1023 & 8.864e-02 & 2.27 & 6.693e+00 & 1.40 \\ \hline 
1/32 & 1023 & 1.944e-01 & 1.27 & 1.055e+01 & 0.72 & 4095 & 1.017e-02 & 3.12 & 1.958e+00 & 1.77 \\ \hline 
1/64 & 4095 & 5.846e-02 & 1.73 & 5.432e+00 & 0.96 & 16383 & 1.138e-03 & 3.16 & 5.168e-01 & 1.92 \\ \hline 
1/128 & 16383 & 1.536e-02 & 1.93 & 2.724e+00 & 1.00 & 65535 & 1.365e-04 & 3.06 & 1.312e-01 & 1.98 \\ \hline
1/256 & 65535 & 3.888e-03 & 1.98 & 1.363e+00 & 1.00 & 262143 & 1.686e-05 & 3.02 & 3.293e-02 & 1.99 \\ \hline
\end{tabular}
\caption{$L^2$ error, $H^1$ error and order for FEM solving 2D elliptic equation when $c=-1$}
\label{table:2D_nega_FEM_c1}
}}\end{center}
\end{table}

\begin{table}[H]
\addtolength{\tabcolsep}{-3.5pt}
\begin{center}
{{\begin{tabular}{|c|c|c|c|c|c|c|c|c|c|c|}
\hline
\multirow{2}{*}{$h$} &  \multicolumn{5}{c|}{$P_1$ linear FEM} & \multicolumn{5}{c|}{$P_2$ quadratic FEM} \\ \cline{2-11}
& dof & $\Vert u-u_h\Vert _{L^2}$ & order & $\Vert u-u_h\Vert _{H^1}$ & order & dof & $\Vert u-u_h\Vert _{L^2}$ & order & $\Vert u-u_h\Vert _{H^1}$ & order \\ \hline 
\cline{1-11} 
1/8 & 63 & 5.427e-01 & - & 2.344e+01 & - & 255 & 2.881e-01 & - & 1.902e+01 & - \\ \hline 
1/16 & 255 & 2.924e-01 & 0.89 & 1.813e+01 & 0.37 & 1023 & 5.823e-02 & 2.31 & 7.263e+00 & 1.39 \\ \hline 
1/32 & 1023 & 9.111e-02 & 1.68 & 1.040e+01 & 0.80 & 4095 & 8.304e-03 & 2.81 & 2.040e+00 & 1.83 \\ \hline 
1/64 & 4095 & 2.408e-02 & 1.92 & 5.388e+00 & 0.95 & 16383 & 1.084e-03 & 2.94 & 5.291e-01 & 1.95 \\ \hline 
1/128 & 16383 & 6.104e-03 & 1.98 & 2.718e+00 & 0.99 & 65535 & 1.323e-02 & -3.61 & 1.323e+01 & -4.64 \\ \hline
1/256 & 65535 & 1.599e-03 & 1.93 & 1.438e+00 & 0.92 & 262143 & 3.231e-05 & 8.68 & 4.300e-02 & 8.27 \\ \hline
\end{tabular}
\caption{$L^2$ error, $H^1$ error and order for FEM solving 2D elliptic equation when $c=-1e+06$}
\label{table:2D_nega_FEM_c4}
}}\end{center}
\end{table}

\subsection{Example 3}
We consider the 2D elliptic equation
\begin{equation}
		\left\{
			\begin{aligned}
				 &-\nabla\cdot(A\nabla u)+cu=f,\quad x\in (0,1)^2, \\
				 &B^1(u)=0, \quad x\in \partial(0,1)^2, \\
			\end{aligned}
		\right.
	\end{equation}
with the analytical solution $u(x,y)=\sin^2(2\pi x)\sin^2(2\pi y)\cos^2(2\pi x)\cos^2(2\pi y)$. We apply the orthogonal greedy algorithm (OGA) with dictionary $\mathbb{P}_2^2$ (i.e. corresponding to ${\rm{ReLU}}^2$). For the parameters of the dictionary, $\omega=(\cos(\theta), \sin(\theta))$, $\theta \in [0, 2\pi]$ and $b\in [-2,2]$. We present $A=[2,1;1,3]$ and $c=-1$ and $-1e+06$ respectively and Table \ref{table:exam3_2D_nega_c1}-\ref{table:exam3_2D_nega_c4} show the results.
\begin{table}[H]
\begin{center}
{{\begin{tabular}{|c|c|c|c|c|c|}
\cline{1-6}
$n$ & dof& $\Vert u-u_h\Vert _{L^2}$ & order & $\Vert u-u_h\Vert _{H^1}$ & order \\ \hline 
\cline{1-6} 
16 & 32 & 1.508e-02 & 0.38 & 4.041e-01 & 0.20 \\ \hline 
32 & 64 & 9.989e-03 & 0.59 & 3.086e-01 & 0.39 \\ \hline 
64 & 128 & 3.716e-03 & 1.43 & 1.360e-01 & 1.18 \\ \hline 
128 & 256 & 9.624e-04 & 1.95 & 5.825e-02 & 1.22 \\ \hline 
256 & 512 & 3.262e-04 & 1.56 & 2.889e-02 & 1.01 \\ \hline
\end{tabular}
\caption{$L^2$ error, $H^1$ error and order for greedy algorithm solving 2D elliptic equation when $c=-1$}
\label{table:exam3_2D_nega_c1}
}}\end{center}
\end{table}

\begin{table}[H]
\begin{center}
{{\begin{tabular}{|c|c|c|c|c|c|}
\cline{1-6}
$n$ & dof& $\Vert u-u_h\Vert _{L^2}$ & order & $\Vert u-u_h\Vert _{H^1}$ & order \\ \hline 
\cline{1-6} 
16 & 32 & 1.251e-02 & 0.46 & 3.887e-01 & 0.28 \\ \hline 
32 & 64 & 8.387e-03 & 0.58 & 3.309e-01 & 0.23 \\ \hline 
64 & 128 & 3.822e-03 & 1.13 & 1.765e-01 & 0.91 \\ \hline 
128 & 256 & 1.245e-03 & 1.62 & 8.581e-02 & 1.04 \\ \hline 
256 & 512 & 4.651e-04 & 1.42 & 4.545e-02 & 0.92 \\ \hline
\end{tabular}
\caption{$L^2$ error, $H^1$ error and order for greedy algorithm solving 2D elliptic equation when $c=-1e+06$}
\label{table:exam3_2D_nega_c4}
}}\end{center}
\end{table}

Next, we present the result of FEM for different $c$ with different $h$. We use the standard triangle Lagrange elements $P_k\ (k=1,2)$ where $k$ denotes the degree of the polynomials. The degrees of freedom (dof) for $P_k$ FEM is $(k/h)^2-1$. The results show that under the same degree of freedom, the OGA method can reach much lower errors. 
\begin{table}[H]
\addtolength{\tabcolsep}{-3.5pt}
\begin{center}
{{\begin{tabular}{|c|c|c|c|c|c|c|c|c|c|c|}
\hline
\multirow{2}{*}{$h$} &  \multicolumn{5}{c|}{$P_1$ linear FEM} & \multicolumn{5}{c|}{$P_2$ quadratic FEM} \\ \cline{2-11}
& dof & $\Vert u-u_h\Vert _{L^2}$ & order & $\Vert u-u_h\Vert _{H^1}$ & order & dof & $\Vert u-u_h\Vert _{L^2}$ & order & $\Vert u-u_h\Vert _{H^1}$ & order \\ \hline 
\cline{1-11} 
1/8 & 63 & 2.241e-02 & - & 4.398e-01 & - & 255 & 6.679e-03 & - & 2.428e-01 & - \\ \hline 
1/16 & 255 & 8.592e-03 & 1.38 & 2.624e-01 & 0.75 & 1023 & 7.425e-04 & 3.17 & 6.750e-02 & 1.85 \\ \hline 
1/32 & 1023 & 2.626e-03 & 1.71 & 1.387e-01 & 0.92 & 4095 & 8.576e-05 & 3.11 & 1.846e-02 & 1.87 \\ \hline 
1/64 & 4095 & 6.971e-04 & 1.91 & 7.015e-02 & 0.98 & 16383 & 1.033e-05 & 3.05 & 4.741e-03 & 1.96 \\ \hline 
1/128 & 16383 & 1.770e-04 & 1.98 & 3.516e-02 & 1.00 & 65535 & 1.277e-06 & 3.02 & 1.194e-03 & 1.99 \\ \hline
1/256 & 65535 & 4.443e-05 & 1.99 & 1.759e-02 & 1.00 & 262143 & 1.592e-07 & 3.00 & 2.990e-04 & 2.00 \\ \hline
\end{tabular}
\caption{$L^2$ error, $H^1$ error and order for FEM solving 2D elliptic equation when $c=-1$}
\label{table:2D_exam3_nega_FEM_c1}
}}\end{center}
\end{table}

\begin{table}[H]
\addtolength{\tabcolsep}{-3.5pt}
\begin{center}
{{\begin{tabular}{|c|c|c|c|c|c|c|c|c|c|c|}
\hline
\multirow{2}{*}{$h$} &  \multicolumn{5}{c|}{$P_1$ linear FEM} & \multicolumn{5}{c|}{$P_2$ quadratic FEM} \\ \cline{2-11}
& dof & $\Vert u-u_h\Vert _{L^2}$ & order & $\Vert u-u_h\Vert _{H^1}$ & order & dof & $\Vert u-u_h\Vert _{L^2}$ & order & $\Vert u-u_h\Vert _{H^1}$ & order \\ \hline 
\cline{1-11} 
1/8 & 63 & 7.345e-03 & - & 3.282e-01 & - & 255 & 4.370e-03 & - & 2.175e-01 & - \\ \hline 
1/16 & 255 & 4.651e-03 & 0.66 & 2.582e-01 & 0.35 & 1023 & 5.984e-04 & 2.87 & 7.063e-02 & 1.62 \\ \hline
1/32 & 1023 & 1.275e-03 & 1.87 & 1.377e-01 & 0.91 & 4095 & 8.018e-05 & 2.90 & 1.883e-02 & 1.91 \\ \hline 
1/64 & 4095 & 3.264e-04 & 1.97 & 7.000e-02 & 0.98 & 16383 & 1.043e-05 & 2.94 & 4.900e-03 & 1.94 \\ \hline 
1/128 & 16383 & 8.210e-05 & 2.00 & 3.514e-02 & 0.99 & 65535 & 1.499e-06 & 2.80 & 1.309e-03 & 1.90 \\ \hline
1/256 & 65535 & 2.058e-05 & 2.00 & 1.760e-02 & 1.00 & 262143 & 1.640e-07 & 3.19 & 3.005e-04 & 2.12 \\ \hline
\end{tabular}
\caption{$L^2$ error, $H^1$ error and order for FEM solving 2D elliptic equation when $c=-1e+06$}
\label{table:2D_exam3_nega_FEM_c4}
}}\end{center}
\end{table}

\subsection{Example 4}
We consider the 3D elliptic equation
\begin{equation}
		\left\{
			\begin{aligned}
				 &-\Delta u+cu=f,\quad x\in (0,1)^3, \\
				 &\frac{\partial u}{\partial n}=0, \quad x\in \partial(0,1)^3,
			\end{aligned}
		\right.
	\end{equation}
with the analytical solution $u(x,y)=\cos(2\pi x)\cos(2\pi y)\cos(2\pi z)$. We apply the orthogonal greedy algorithm (OGA) with dictionary $\mathbb{P}_2^3$ (i.e. corresponding to ${\rm{ReLU}}^2$). For the parameters of the dictionary, $\omega_i=\pm 1,i=1,2,3$ and $b\in [-2,2]$. We present $c=-1$ and $-1e+06$ respectively and Table \ref{table:3D_nega_c1}-\ref{table:3D_nega_c2} show the results.
\begin{table}[H]
\begin{center}
{{\begin{tabular}{|c|c|c|c|c|c|}
\cline{1-6}
$n$ & dof& $\Vert u-u_h\Vert _{L^2}$ & order & $\Vert u-u_h\Vert _{H^1}$ & order \\ \hline 
\cline{1-6} 
16 & 64 & 2.530e-01 & - & 2.713e+00 & - \\ \hline 
32 & 128 & 2.699e-02 & 3.23 & 5.899e-01 & 2.20 \\ \hline 
64 & 256 & 5.618e-03 & 2.26 & 2.132e-01 & 1.47 \\ \hline 
128 & 512 & 6.093e-04 & 3.20 & 4.460e-02 & 2.26 \\ \hline 
256 & 1024 & 1.099e-04 & 2.47 & 9.620e-03 & 2.21 \\ \hline
\end{tabular}
\caption{$L^2$ error, $H^1$ error and order for greedy algorithm solving 3D elliptic equation when $c=-1$}
\label{table:3D_nega_c1}
}}\end{center}
\end{table}
\begin{table}[H]
\begin{center}
{{\begin{tabular}{|c|c|c|c|c|c|}
\cline{1-6}
$n$ & dof& $\Vert u-u_h\Vert _{L^2}$ & order & $\Vert u-u_h\Vert _{H^1}$ & order \\ \hline 
\cline{1-6} 
16 & 64 & 2.526e-01 & - & 2.838e+00 & - \\ \hline 
32 & 128 & 2.943e-02 & 3.10 & 6.994e-01 & 2.02 \\ \hline 
64 & 256 & 5.562e-03 & 2.40 & 2.376e-01 & 1.56 \\ \hline 
128 & 512 & 5.023e-04 & 3.47 & 4.736e-02 & 2.33 \\ \hline 
256 & 1024 & 5.746e-05 & 3.13 & 1.031e-02 & 2.20 \\ \hline
\end{tabular}
\caption{$L^2$ error, $H^1$ error and order for greedy algorithm solving 3D elliptic equation when $c=-1e+06$}
\label{table:3D_nega_c2}
}}\end{center}
\end{table}
Next, we present the result of FEM for different $c$ with different $h$. We use the standard triangle Lagrange elements $P_k\ (k=1,2)$ where $k$ denotes the degree of the polynomials. The degrees of freedom (dof) for $P_k$ FEM is $(k/h)^3-1$. The results show that under the same degree of freedom, the OGA method can reach much lower errors. 
\begin{table}[H]
\addtolength{\tabcolsep}{-3.5pt}
\begin{center}
{{\begin{tabular}{|c|c|c|c|c|c|c|c|c|c|c|}
\hline
\multirow{2}{*}{$h$} &  \multicolumn{5}{c|}{$P_1$ linear FEM} & \multicolumn{5}{c|}{$P_2$ quadratic FEM} \\ \cline{2-11}
& dof & $\Vert u-u_h\Vert _{L^2}$ & order & $\Vert u-u_h\Vert _{H^1}$ & order & dof & $\Vert u-u_h\Vert _{L^2}$ & order & $\Vert u-u_h\Vert _{H^1}$ & order \\ \hline 
\cline{1-11} 
1/4 & 63 & 2.682e-01 & - & 3.119e+00 & - & 511 & 6.002e-02 & - & 1.090e+00 & - \\ \hline 
1/8 & 511 & 1.263e-01 & 1.09 & 1.859e+00 & 0.75 & 4095 & 6.949e-03 & 3.11 & 3.249e-01 & 1.75 \\ \hline 
1/16 & 4095 & 3.984e-02 & 1.66 & 9.655e-01 & 0.95 & 32767 & 7.547e-04 & 3.20 & 8.822e-02 & 1.88 \\ \hline 
1/32 & 32767 & 1.063e-02 & 1.91 & 4.864e-01 & 0.99 & 262143 & 8.904e-05 & 3.08 & 2.273e-02 & 1.96 \\ \hline 
1/64 & 262143 & 2.703e-03 & 1.98 & 2.437e-01 & 1.00 & 2097151 & 1.097e-05 & 3.02 & 5.743e-03 & 1.98 \\ \hline 
\end{tabular}
\caption{$L^2$ error, $H^1$ error and order for FEM solving 3D elliptic equation when $c=-1$}
\label{table:3D_exam3_nega_FEM_c1}
}}\end{center}
\end{table}

\begin{table}[H]
\addtolength{\tabcolsep}{-3.5pt}
\begin{center}
{{\begin{tabular}{|c|c|c|c|c|c|c|c|c|c|c|}
\hline
\multirow{2}{*}{$h$} &  \multicolumn{5}{c|}{$P_1$ linear FEM} & \multicolumn{5}{c|}{$P_2$ quadratic FEM} \\ \cline{2-11}
& dof & $\Vert u-u_h\Vert _{L^2}$ & order & $\Vert u-u_h\Vert _{H^1}$ & order & dof & $\Vert u-u_h\Vert _{L^2}$ & order & $\Vert u-u_h\Vert _{H^1}$ & order \\ \hline 
\cline{1-11} 
1/4 & 63 & 1.956e-01 & - & 3.267e+00 & - & 511 & 3.850e-02 & - & 1.252e+00 & - \\ \hline 
1/8 & 511 & 6.314e-02 & 1.63 & 1.863e+00 & 0.81 & 4095 & 5.410e-03 & 2.83 & 3.549e-01 & 1.82 \\ \hline 
1/16 & 4095 & 1.686e-02 & 1.90 & 9.641e-01 & 0.95 & 32767 & 6.964e-04 & 2.96 & 9.168e-02 & 1.95 \\ \hline 
1/32 & 32767 & 4.286e-03 & 1.98 & 4.863e-01 & 0.99 & 262143 & 8.779e-05 & 2.99 & 2.314e-02 & 1.99 \\ \hline 
1/64 & 262143 & 1.076e-03 & 1.99 & 2.437e-01 & 1.00 & 2097151 & 1.170e-05 & 2.91 & 6.961e-03 & 1.73 \\ \hline 
\end{tabular}
\caption{$L^2$ error, $H^1$ error and order for FEM solving 3D elliptic equation when $c=-1e+06$}
\label{table:3D_exam3_nega_FEM_c2}
}}\end{center}
\end{table}

\subsection{Example 5: Helmholtz Equation }

We consider the 2D Helmholtz equation given as follows: 
\begin{equation}
		\left\{
			\begin{aligned}
				& -\Delta u-k^2u=f,\quad x\in (0,1)^2, \\
				& \frac{\partial u}{\partial n}=0,\quad  x\in \partial(0,1)^2,
			\end{aligned}
		\right.
	\end{equation} 
where $k>1$ is the wavenumber. When applied to a large wavenumber and the solution is related to $k$, the problem will be difficult to solve due to the high indefiniteness. In this example, the source term $f=k^2\cos(k x)\cos(k y)-k^2$ and the analytical solution is $u(x,y)=\cos(k x)\cos(k y)+1$. We tested $k=2\pi$ and $10\pi$ both on the greedy algorithm for the ${\rm{ReLU}}^3$ activation function and on the FEM for the $P_2$ quadratic elements. The results are shown in Tables \ref{table:2D_exam5_Greedy}-\ref{table:2D_exam5_FEM}, which show that greedy algorithm can reach much lower error than FEM under the same dof.

\begin{table}[H]
\addtolength{\tabcolsep}{-3.5pt}
\begin{center}
{{\begin{tabular}{|c|c|c|c|c|c|c|c|c|c|c|}
\hline
\multirow{2}{*}{$n$} &  \multicolumn{5}{c|}{$k=2\pi$} & \multicolumn{5}{c|}{$k=10\pi$} \\ \cline{2-11}
& dof & $\Vert u-u_h\Vert _{L^2}$ & order & $\Vert u-u_h\Vert _{H^1}$ & order & dof & $\Vert u-u_h\Vert _{L^2}$ & order & $\Vert u-u_h\Vert _{H^1}$ & order \\ \hline 
\cline{1-11} 
16 & 48 & 2.801e-02 & - & 1.382e-01 & - & 48 & 2.150e+00 & - & 3.399e+00 & - \\ \hline 
32 & 96 & 2.229e-03 & 3.65 & 2.600e-02 & 2.41 & 96 & 7.305e-01 & 1.56 & 1.193e+00 & 1.51 \\ \hline 
64 & 192 & 2.952e-04 & 2.92 & 6.435e-03 & 2.01 & 192 & 6.410e-02 & 3.51 & 1.719e-01 & 2.80 \\ \hline 
128 & 384 & 3.887e-05 & 2.93 & 1.649e-03 & 1.96 & 384 & 5.674e-03 & 3.50 & 4.195e-02 & 2.03 \\ \hline 
256 & 768 & 1.176e-05 & 1.73 & 4.430e-04 & 1.90 & 768 & 8.196e-04 & 2.79 & 1.134e-02 & 1.89 \\ \hline 
\end{tabular}
\caption{$L^2$ error, $H^1$ error and order for greedy algorithm solving 2D Helmholtz equation}
\label{table:2D_exam5_Greedy}
}}\end{center}
\end{table}

\begin{table}[H]
\addtolength{\tabcolsep}{-3.5pt}
\begin{center}
{{\begin{tabular}{|c|c|c|c|c|c|c|c|c|c|c|}
\hline
\multirow{2}{*}{$h$} &  \multicolumn{5}{c|}{$k=2\pi$} & \multicolumn{5}{c|}{$k=10\pi$} \\ \cline{2-11}
& dof & $\Vert u-u_h\Vert _{L^2}$ & order & $\Vert u-u_h\Vert _{H^1}$ & order & dof & $\Vert u-u_h\Vert _{L^2}$ & order & $\Vert u-u_h\Vert _{H^1}$ & order \\ \hline 
\cline{1-11} 
1/16 & 1023 & 7.957e-02 & - & 1.267e-01 & - & 1023 & 1.419e+00 & - & 2.261e+00 & - \\ \hline
1/32 & 4095 & 4.084e-02 & 0.96 & 6.469e-02 & 0.97 & 4095 & 1.906e-02 & - & 9.205e-02 & - \\ \hline 
1/64 & 16383 & 2.069e-02 & 0.98 & 3.273e-02 & 0.98 & 16383 & 2.073e-02 & - & 4.017e-02 & - \\ \hline 
1/128 & 65535 & 1.041e-02 & 0.99 & 1.647e-02 & 0.99 & 65535 & 1.041e-02 & 0.99 & 1.749e-02 & 1.20 \\ \hline
1/256 & 262143 & 5.224e-03 & 0.99 & 8.260e-03 & 1.00 & 262143 & 5.224e-03 & 0.99 & 8.391e-03 & 1.06 \\ \hline
1/512 & 1048575 & 2.616e-03 & 1.00 & 4.136e-03 & 1.00 & 1048575 & 2.616e-03 & 1.00 & 4.153e-03 & 1.01 \\ \hline
\end{tabular}
\caption{$L^2$ error, $H^1$ error and order for FEM solving 2D Helmholtz equation}
\label{table:2D_exam5_FEM}
}}\end{center}
\end{table}

\section{Conclusion}
We present a priori error estimate for the shallow neural network for the indefinite elliptic partial differential equation. The OGA method is then shown to produce the solution that satisfies the theoretical accuracy rate. The errors of the method consist of modeling errors and discretization errors. We analyze the priori error estimate and convergence analysis of the different components. Such an error analysis is novel in the framework of shallow neural network theory. We also present several numerical examples in comparions with those of Finite element methods. The results show that the shallow neural network can achieve better accuracy than those from FEM when the degrees of freedom are comparable. In our future work, we shall establish the convergence analysis of OGA which is justfied only numerically in this paper.

\bibliographystyle{plain}
\bibliography{bibfile}

\begin{thebibliography}{10}

\bibitem{bihlo2022physics}
Alex Bihlo and Roman~O Popovych.
\newblock Physics-informed neural networks for the shallow-water equations on the sphere.
\newblock {\em Journal of Computational Physics}, 456:111024, 2022.

\bibitem{bramble1970estimation}
James~H Bramble and SR~Hilbert.
\newblock Estimation of linear functionals on sobolev spaces with application to fourier transforms and spline interpolation.
\newblock {\em SIAM Journal on Numerical Analysis}, 7(1):112--124, 1970.

\bibitem{bramble1988analysis}
James~H Bramble, Joseph~E Pasciak, and Jinchao Xu.
\newblock The analysis of multigrid algorithms for nonsymmetric and indefinite elliptic problems.
\newblock {\em Mathematics of Computation}, 51(184):389--414, 1988.

\bibitem{cai1992domain}
Xiao-Chuan Cai and Olof~B Widlund.
\newblock Domain decomposition algorithms for indefinite elliptic problems.
\newblock {\em SIAM Journal on Scientific and Statistical Computing}, 13(1):243--258, 1992.

\bibitem{carstensen2022priori}
Carsten Carstensen, Rekha Khot, and Amiya~K Pani.
\newblock A priori and a posteriori error analysis of the lowest-order ncvem for second-order linear indefinite elliptic problems.
\newblock {\em Numerische Mathematik}, 151(3):551--600, 2022.

\bibitem{carstensen2021adaptive}
Carsten Carstensen and Rui Ma.
\newblock Adaptive mixed finite element methods for non-self-adjoint indefinite second-order elliptic pdes with optimal rates.
\newblock {\em SIAM Journal on Numerical Analysis}, 59(2):955--982, 2021.

\bibitem{carstensen2022stability}
Carsten Carstensen, Neela Nataraj, and Amiya~K Pani.
\newblock Stability of mixed fems for non-selfadjoint indefinite second-order linear elliptic pdes.
\newblock {\em Numerische Mathematik}, 150(4):975--992, 2022.

\bibitem{devore1998nonlinear}
Ronald~A DeVore.
\newblock Nonlinear approximation.
\newblock {\em Acta numerica}, 7:51--150, 1998.

\bibitem{devore1996some}
Ronald~A DeVore and Vladimir~N Temlyakov.
\newblock Some remarks on greedy algorithms.
\newblock {\em Advances in computational Mathematics}, 5:173--187, 1996.

\bibitem{hong2021priori}
Qingguo Hong, Jonathan~W Siegel, and Jinchao Xu.
\newblock A priori analysis of stable neural network solutions to numerical pdes.
\newblock {\em arXiv preprint arXiv:2104.02903}, 2021.

\bibitem{jin2021nsfnets}
Xiaowei Jin, Shengze Cai, Hui Li, and George~Em Karniadakis.
\newblock Nsfnets (navier-stokes flow nets): Physics-informed neural networks for the incompressible navier-stokes equations.
\newblock {\em Journal of Computational Physics}, 426:109951, 2021.

\bibitem{li2020fourier}
Zongyi Li, Nikola Kovachki, Kamyar Azizzadenesheli, Burigede Liu, Kaushik Bhattacharya, Andrew Stuart, and Anima Anandkumar.
\newblock Fourier neural operator for parametric partial differential equations.
\newblock {\em arXiv preprint arXiv:2010.08895}, 2020.

\bibitem{lu2021learning}
Lu~Lu, Pengzhan Jin, Guofei Pang, Zhongqiang Zhang, and George~Em Karniadakis.
\newblock Learning nonlinear operators via deeponet based on the universal approximation theorem of operators.
\newblock {\em Nature machine intelligence}, 3(3):218--229, 2021.

\bibitem{mattey2022novel}
Revanth Mattey and Susanta Ghosh.
\newblock A novel sequential method to train physics informed neural networks for allen cahn and cahn hilliard equations.
\newblock {\em Computer Methods in Applied Mechanics and Engineering}, 390:114474, 2022.

\bibitem{pang2019fpinns}
Guofei Pang, Lu~Lu, and George~Em Karniadakis.
\newblock fpinns: Fractional physics-informed neural networks.
\newblock {\em SIAM Journal on Scientific Computing}, 41(4):A2603--A2626, 2019.

\bibitem{raissi2019physics}
Maziar Raissi, Paris Perdikaris, and George~E Karniadakis.
\newblock Physics-informed neural networks: A deep learning framework for solving forward and inverse problems involving nonlinear partial differential equations.
\newblock {\em Journal of Computational physics}, 378:686--707, 2019.

\bibitem{schatz1996some}
Alfred Schatz and Junping Wang.
\newblock Some new error estimates for ritz--galerkin methods with minimal regularity assumptions.
\newblock {\em Mathematics of Computation}, 65(213):19--27, 1996.

\bibitem{siegel2023greedy}
Jonathan~W Siegel, Qingguo Hong, Xianlin Jin, Wenrui Hao, and Jinchao Xu.
\newblock Greedy training algorithms for neural networks and applications to pdes.
\newblock {\em Journal of Computational Physics}, page 112084, 2023.

\bibitem{siegel2022high}
Jonathan~W Siegel and Jinchao Xu.
\newblock High-order approximation rates for shallow neural networks with cosine and reluk activation functions.
\newblock {\em Applied and Computational Harmonic Analysis}, 58:1--26, 2022.

\bibitem{siegel2023characterization}
Jonathan~W Siegel and Jinchao Xu.
\newblock Characterization of the variation spaces corresponding to shallow neural networks.
\newblock {\em Constructive Approximation}, pages 1--24, 2023.

\bibitem{temlyakov2008greedy}
Vladimir~N Temlyakov.
\newblock Greedy approximation.
\newblock {\em Acta Numerica}, 17:235--409, 2008.

\bibitem{wang1993convergence}
Junping Wang.
\newblock Convergence analysis of multigrid algorithms for nonselfadjoint and indefinite elliptic problems.
\newblock {\em SIAM journal on numerical analysis}, 30(1):275--285, 1993.

\bibitem{CiCP-28-1707}
Jinchao Xu.
\newblock Finite neuron method and convergence analysis.
\newblock {\em Communications in Computational Physics}, 28(5):1707--1745, 2020.

\bibitem{xu1992preconditioned}
Jinchao Xu and Xiao-Chuan Cai.
\newblock A preconditioned gmres method for nonsymmetric or indefinite problems.
\newblock {\em Mathematics of Computation}, 59(200):311--319, 1992.

\bibitem{yu2018deep}
Bing Yu et~al.
\newblock The deep ritz method: a deep learning-based numerical algorithm for solving variational problems.
\newblock {\em Communications in Mathematics and Statistics}, 6(1):1--12, 2018.

\bibitem{zang2020weak}
Yaohua Zang, Gang Bao, Xiaojing Ye, and Haomin Zhou.
\newblock Weak adversarial networks for high-dimensional partial differential equations.
\newblock {\em Journal of Computational Physics}, 411:109409, 2020.

\bibitem{zhu2022superconvergent}
Peng Zhu and Shenglan Xie.
\newblock Superconvergent weak galerkin methods for non-self adjoint and indefinite elliptic problems.
\newblock {\em Applied Numerical Mathematics}, 172:300--314, 2022.

\end{thebibliography}

\end{document}